\documentclass[12pt]{article}
\usepackage[english]{babel}
\usepackage[utf8]{inputenc}

\usepackage{mathtools}
\usepackage{comment}
\usepackage{xcolor}
\usepackage{tikz}

\usepackage{relsize}
\usetikzlibrary{positioning}
\usepackage{graphicx}
\usepackage{amsmath}
\usepackage{amssymb}
\usepackage{amsthm}
\usepackage{amsfonts}

\usepackage{setspace}
\usepackage{subcaption}

\usepackage{hyperref}
\hypersetup{
	colorlinks = true,
	linkcolor = {blue},
	citecolor={black}
}
\usepackage{smartref}
\usepackage{cleveref}

\topmargin -2cm
\oddsidemargin -0.5cm
\textwidth 17cm
\textheight	23.5cm
\parindent=0mm


%
%
%

\newcommand{\Q}{\mathbb{Q}}           
\newcommand{\R}{\mathbb{R}}            
\newcommand{\C}{\mathbb{C}}           
\newcommand{\Qalg}{\overline{\Q}}     

\newcommand{\T}{\top}     
\newcommand{\G}{\mathcal{G}}          
\newcommand{\II}{\mathcal{I}}          
\DeclareMathOperator{\Ker}{Ker}    

\newtheorem{prop}{Proposition}
\newtheorem{teo}{Theorem}

\newtheorem{conj}{Conjecture}
\newtheorem*{conj*}{Conjecture}
\newtheorem{defin}{Definition}

\title{Vertex distinction with subgraph centrality: a proof of Estrada's conjecture and some generalizations}
\author{Francesco Ballini\footnotemark[2]\hspace{0.5cm} Nikita Deniskin\footnotemark[2]}
\date{18 July 2020}

\begin{document}
\maketitle

\footnotetext[1]{Scuola Normale Superiore, Pisa, Italy. E--mail : {\tt francesco.ballini@sns.it, nikita.deniskin@sns.it}.}

\begin{abstract}

Centrality measures are used in network science to identify the most important vertices for transmission of information and dynamics on a graph.
One of these measures, introduced by Estrada and collaborators, is the $\beta$-subgraph centrality, which is based on the exponential of the matrix
$\beta A$, where $A$ is the adjacency matrix of the graph and $\beta$ is a real parameter (``inverse temperature"). We prove that for algebraic
$\beta$, two vertices with equal $\beta$-subgraph centrality are necessarily cospectral.
We further show that two such vertices must have the same degree and eigenvector centralities. Our results settle 
a conjecture of Estrada and a generalization of it due to Kloster, Kr\'al and Sullivan. We also discuss possible extensions of our results.

\end{abstract}

\textit{Keywords:} Subgraph centrality, Walk-regular graph,
Cospectral vertices, Lindemann-Weierstrass Theorem.

\section{Introduction}

Centrality measures have been used to determine the importance of a vertex in a graph, with many applications in biology, finance, sociology, epidemiology, and more generally in network science. Among many such measures, we focus here on  subgraph centrality, which is based on counting the number of closed walks of different lengths passing through each node. This measure has been 
successfully used in the study of protein-protein interaction networks, in the analysis of traffic and other transportation networks, and in several studies of brain networks, to name just a few applications; see, for instance, \cite{BKcentralitymeasures, Estrada, EPHwalkentropies, EHstatisticalmechanics,EHcommunicability, EHB12, ERVsubgraphdefinition}.

Let $\G=(V,E)$ be a simple undirected graph with $|V|=n$ vertices and adjacency matrix $A$. Later we will also consider the case where $\G$ is a directed and weighted graph.
The $\beta$-subgraph centrality with $\beta \geq 0$  is defined as $[e^{\beta A}]_{ii}$ for  each vertex $i$ of $\G$. 
It was introduced by Estrada and Rodr\'iguez-Vel\'azquez in \cite{ERVsubgraphdefinition} for $\beta=1$, 
as a node centrality measure. Two years later, Estrada and Hatano \cite{EHstatisticalmechanics} introduced a generalization of it involving the tuneable parameter $\beta$.
The idea is to write $e^{\beta A}$  as a power series expansion:

\begin{equation}
\begin{aligned}
e^{\beta A}\, =\;& I+ \beta A+ \frac{\beta^2}{2}A^2 + \frac{\beta^3}{3!} A^3+\cdots,\\
[e^{\beta A}]_{ii}\, = \;& 1 + \beta[A]_{ii} + \frac{\beta^2}{2} [A^2]_{ii} + \frac{\beta^3}{3!} [A^3]_{ii}+\cdots.
\end{aligned}
\label{espansione_expA}
\end{equation}

As we have anticipated, the $\beta$-subgraph centrality of node $i$ is then given by $[e^{\beta A}]_{ii}$. Hence, the $\beta$-subgraph centrality is strictly related to the number of closed walks starting from $i$, since the number of such walks of length $r$ is $[A^r]_{ii}$. By weighting walks of length $r$ by $\beta^r/r!$, longer closed walks are penalized. Nodes that are visited by
many short, closed walks are considered important. The role of the parameter $\beta$, known as the
``inverse temperature," is that of giving more or less weight to walks
of a given length, and also to model situations where the network is subject to some external ``stress". 

In the above-mentioned applications in network science, it is often useful to determine when two ``different" vertices have the same centrality measure. As a first step, one can ask which graphs have all vertices with equal subgraph centrality. Highly symmetrical graphs, such as vertex-transitive graphs, satisfy this condition; a wider class of graphs that satisfy is that of walk-regular graphs. It was conjectured that there are not any others:

\begin{conj*}Given $\beta>0$, a graph $\G$ has all vertices with the same $\beta$-subgraph centrality if and only if $\G$ is walk-regular.
\end{conj*}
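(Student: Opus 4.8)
The plan is to prove both implications, with all the substance in the ``only if'' direction. The ``if'' direction is a one-line check: if $\G$ is walk-regular then $[A^r]_{ii}$ is independent of $i$ for every $r\ge 0$, so summing the series \eqref{espansione_expA} term by term shows that $[e^{\beta A}]_{ii}$ takes the same value at every vertex, for every $\beta$; this direction needs no arithmetic hypothesis on $\beta$.

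For the converse I would assume $\beta$ to be algebraic — the case this paper addresses — and reduce the problem to a \emph{local} statement: if $i,j$ are vertices with $[e^{\beta A}]_{ii}=[e^{\beta A}]_{jj}$, then $[A^r]_{ii}=[A^r]_{jj}$ for all $r\ge 0$ (i.e.\ $i$ and $j$ are cospectral vertices); applying this to every pair then gives that $\G$ is walk-regular. To prove the local statement, let $\lambda_1,\dots,\lambda_m$ be the distinct eigenvalues of $A$ — real because $A$ is symmetric, and algebraic because they are roots of a monic integer polynomial — and use the spectral decomposition $A=\sum_{k=1}^m\lambda_k P_k$, with $P_k$ the orthogonal projection onto the $\lambda_k$-eigenspace. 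Then $[A^r]_{ii}=\sum_k\lambda_k^r\,[P_k]_{ii}$ for all $r$, and $[e^{\beta A}]_{ii}=\sum_k e^{\beta\lambda_k}\,[P_k]_{ii}$, so the hypothesis becomes the vanishing relation $\sum_{k=1}^m c_k\,e^{\beta\lambda_k}=0$ with $c_k:=[P_k]_{ii}-[P_k]_{jj}$.

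The decisive observation is that the coefficients $c_k$ are algebraic numbers: from $P_k=\prod_{l\ne k}(A-\lambda_l I)/(\lambda_k-\lambda_l)$ one sees that every entry of $P_k$ lies in $\Q(\lambda_1,\dots,\lambda_m)\subseteq\Qalg$. Since $\beta\ne 0$ is algebraic and the $\lambda_k$ are pairwise distinct, the numbers $\beta\lambda_1,\dots,\beta\lambda_m$ are pairwise distinct algebraic numbers, so by the Lindemann--Weierstrass theorem the exponentials $e^{\beta\lambda_1},\dots,e^{\beta\lambda_m}$ are linearly independent over $\Qalg$. Hence $\sum_k c_k e^{\beta\lambda_k}=0$ forces $c_k=0$ for every $k$, i.e.\ $[P_k]_{ii}=[P_k]_{jj}$ for all $k$; substituting back into $[A^r]_{ii}=\sum_k\lambda_k^r[P_k]_{ii}$ yields $[A^r]_{ii}=[A^r]_{jj}$ for all $r$, which proves the local statement and hence the conjecture for algebraic $\beta$.

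I expect the real difficulty to lie not in any single computation but in the arithmetic bookkeeping that makes Lindemann--Weierstrass applicable — certifying that the projector entries and the exponents $\beta\lambda_k$ are algebraic, and that the exponents are genuinely distinct — and in the fact that this is precisely the step forcing $\beta$ to be algebraic. The conjecture for transcendental $\beta$ is left open by this argument, and the weighted/directed generalization mentioned in the introduction, where $A$ need not be diagonalizable and $[e^{\beta A}]_{ii}$ involves terms of the form $\beta^s e^{\beta\lambda_k}$, requires a more careful version of the same linear-independence input.
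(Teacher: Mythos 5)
Your proof is correct, and its skeleton is the same as the paper's: restrict to algebraic $\beta$ (which is exactly the case the paper settles --- for arbitrary $\beta>0$ the statement is in fact false by the Kloster--Kr\'al--Sullivan counterexample, so the transcendental case is not merely ``left open''), reduce to the pairwise claim that $\beta$-subgraph equivalent vertices are cospectral, group the diagonal entries by distinct eigenvalues, and kill the relation $\sum_k c_k e^{\beta\lambda_k}=0$ with Lindemann--Weierstrass. Where you genuinely differ is in the arithmetic bookkeeping: the paper certifies that the coefficients $C_{h\,i}=\sum_{k\in\II_h}q_{ik}\,\widehat{q}_{ki}$ are algebraic by constructing an eigenvector matrix $Q$ with algebraic entries (Gaussian elimination over $\Qalg$, Proposition \ref{prop_kernel}) and invoking algebraicity of $Q^{-1}$ (Proposition \ref{prop_inverse}), whereas you work with the spectral projectors and the Lagrange-interpolation identity $P_k=\prod_{l\neq k}(A-\lambda_l I)/(\lambda_k-\lambda_l)$, which places every entry of $P_k$ in $\Q(\lambda_1,\dots,\lambda_m)\subseteq\Qalg$ at once. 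Your route is slightly cleaner: it dispenses with the two auxiliary propositions, and it makes manifest that the paper's coefficients are precisely $[P_h]_{ii}$, hence independent of the choice of $Q$ (a point the paper has to remark on separately). It also covers, just as the paper's argument does, any diagonalizable matrix with algebraic entries (directed graphs included), since the projector identity needs diagonalizability but not symmetry; only the non-diagonalizable situation of the paper's Section 5, where terms $\beta^s e^{\beta\lambda_k}$ appear, requires the more careful variant you mention.
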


In this paper we will show that this conjecture is true for all algebraic $\beta$, by proving a stronger result: we will show that if $\beta$ is algebraic, two vertices $i,j$ have the same $\beta$-subgraph centrality if and only if they are cospectral. This implies the conjecture because a graph is walk-regular if and only if all its vertices are cospectral. 

\vspace{0.2cm} 

In Section 2, we give all the necessary definitions and show various  formulations of the conjecture. In Section 3 we introduce the Lindemann-Weierstrass Theorem. In Section 4 we prove the main result (Theorem \ref{main_thm}) and Theorem \ref{prop_main}, which is the key element for its proof. In Section 5 we discuss some generalizations of our results, and possible further developments.

\section{Preliminaries}

It is convenient to introduce the following terminology. 
\begin{defin}$\G$ is \textit{$\beta$-subgraph regular} if $\forall \; i,j\in V$, $[e^{\beta A}]_{ii} = [e^{\beta A}]_{jj}$.
\end{defin}

In \cite{ERVsubgraphdefinition} examples were given of graphs with vertices with equal degree, eigenvector, closeness and betweenness centralities, but different 1-subgraph centralities. This led to the following conjecture:

\begin{conj}[Estrada, Rodr\'iguez-Vel\'azquez \cite{ERVsubgraphdefinition}]
	Let $\G$ be a $1$-subgraph regular graph. Then the degree, closeness, eigenvector, and betweenness centralities are also identical for all nodes.
	\label{conjec1}
\end{conj}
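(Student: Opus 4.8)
The plan is to reduce the whole statement to the conclusion of Theorem~\ref{main_thm}. Since $\beta=1$ is algebraic and $\G$ is $1$-subgraph regular, applying Theorem~\ref{main_thm} to every pair of vertices shows that all vertices of $\G$ are pairwise cospectral; equivalently $[A^r]_{ii}=[A^r]_{jj}$ for all $i,j\in V$ and all $r\ge 0$, so $\G$ is walk-regular. It then suffices to prove that walk-regularity forces each of the four centralities to be constant on $V$; throughout I assume $\G$ connected, as is standard for these measures.

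The degree and eigenvector parts follow quickly. The degree of $i$ is the number of closed walks of length two, $\deg(i)=[A^2]_{ii}$, which is constant by walk-regularity, so $\G$ is regular. For eigenvector centrality, fix an orthonormal eigenbasis $u_1,\dots,u_n$ with $Au_k=\lambda_k u_k$ and attach to $i$ the principal spectral measure $\mu_i=\sum_k (u_k)_i^2\,\delta_{\lambda_k}$. Since $[A^r]_{ii}=\int x^r\,d\mu_i$, cospectrality is exactly the equality $\mu_i=\mu_j$ of these measures. Reading it off at the Perron eigenvalue $\lambda_1$, which is simple for connected $\G$, gives $(u_1)_i^2=(u_1)_j^2$; positivity of the Perron vector then forces $(u_1)_i=(u_1)_j$, i.e.\ equal eigenvector centrality. (It also follows that the Perron vector is constant, as expected for a connected regular graph.)

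The closeness and betweenness parts are where I expect the genuine difficulty. Both are \emph{metric} quantities: closeness of $i$ is a monotone function of the distance sum $\sum_{v}d(i,v)$, while betweenness aggregates, over all pairs, the fractions of shortest paths passing through $i$; here the graph distance is recovered from walks as $d(i,v)=\min\{r:[A^r]_{iv}>0\}$. The natural plan is to extract the distance profile $\{d(i,v)\}_{v\in V}$, and then the geodesic counts, from the walk information supplied by Theorem~\ref{main_thm}. The main obstacle is exactly that cospectrality and walk-regularity equate only the \emph{diagonal} entries $[A^r]_{ii}$, i.e.\ the counts of \emph{closed} walks at $i$, whereas distances and shortest-path counts are governed by the \emph{off-diagonal} entries $[A^r]_{iv}$ and, above all, by the first $r$ at which each becomes nonzero --- data that closed-walk counts do not see a priori.

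To close this gap one would have to upgrade walk-regularity to control over the off-diagonal walk structure, showing that the multiset $\{d(i,v)\}_v$ and its associated geodesic counts are already determined by the local spectral data that Theorem~\ref{main_thm} equalizes. I would attempt this by analysing the leading order in $\beta$ of $[e^{\beta A}]_{iv}$ as $\beta\to 0^{+}$ (its valuation records $d(i,v)$, since $[A^r]_{iv}$ first becomes nonzero at $r=d(i,v)$) and by coupling the shared spectrum with equitable-partition and interlacing arguments to force a distance-regular-type rigidity on $\G$. I expect this to be the decisive and most delicate point of the argument: equality of closed-walk counts is visibly weaker than equality of distance profiles, so it is precisely here that any proof of the closeness and betweenness claims must do its essential work.
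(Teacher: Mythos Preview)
Your treatment of the degree and eigenvector centralities is correct and is essentially the paper's own argument: apply Theorem~\ref{prop_main} (or Theorem~\ref{main_thm}) with $\beta=1\in\Qalg$ to get walk-regularity, read off $\deg(i)=[A^2]_{ii}$, and use simplicity and positivity of the Perron eigenvector to conclude equal eigenvector centrality.

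The genuine gap is in the closeness and betweenness parts: these claims are \emph{false}, so no amount of sharpening your off-diagonal or distance-profile analysis will succeed. As the paper notes immediately after stating Conjecture~\ref{conjec1}, counterexamples for closeness and betweenness were found independently by Rombach--Porter and by Stevanovi\'c; there exist $1$-subgraph regular (in fact walk-regular) graphs whose vertices do not all share the same closeness or betweenness centrality. Your own diagnosis points exactly at why: walk-regularity constrains only the diagonal entries $[A^r]_{ii}$, while distances and geodesic counts live in the off-diagonal entries $[A^r]_{iv}$, and there is no mechanism forcing the latter to be constant. The paper accordingly proves only the degree and eigenvector portions of Conjecture~\ref{conjec1} (this is part~(3) of Theorem~\ref{main_thm}); you should restrict your claim to those two centralities and cite the known counterexamples for the other two.
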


Some counterexamples for the closeness and betweenness centralities were found independently by Rombach and Porter \cite{RPdiscrimination} and by Stevanovi\'c \cite{Stevanovic}, but the conjecture remained open for degree and eigenvector centralities.
\vspace{0.2cm}

We recall the following definition:
\begin{defin} $\G$ is \textit{walk-regular} if $\forall \; i,j\in V$  and for every positive integer $r$, $[A^r]_{ii} = [A^r]_{jj}$.
\end{defin}

From the power series expansion of eq.~(\ref{espansione_expA}), it follows immediately that a walk-regular graph is also $\beta$-subgraph regular for all $\beta$. From here on, we assume that $\beta \ne 0$ to avoid trivialities.

A related quantity is the  \textit{walk entropy} of a graph \cite{EPHwalkentropies,EHstatisticalmechanics}, defined as
$$S(\G,\beta) = - \sum_{i=1}^n p_i \ln p_i, \quad p_i=
\frac{[e^{\beta A}]_{ii}}{\text{Tr} [e^{\beta A}]}\,.$$
It is easy to see that the walk entropy is maximized (and equal to $\ln n$) if and only if the graph $\G$ is $\beta$-subgraph regular.  
In \cite{EPHwalkentropies} it was conjectured that $\G$ is walk-regular if and only if $\G$ is $\beta$-subgraph regular for all $\beta > 0$. This was proved true by Benzi in the following stronger form:

\begin{teo}[Benzi \cite{Benzi}, Theorem 2.2]
	A graph $\G$ is walk-regular if and only if $\G$ is $\beta$-subgraph regular for all $\beta \in I \subseteq \R$, where $I$ is any set of real numbers containing an accumulation point.
\end{teo}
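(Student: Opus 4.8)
\emph{Proof proposal.} The plan is to prove the two directions separately; the real content is the converse. The forward implication I would dispatch immediately: if $\G$ is walk-regular then $[A^r]_{ii}=[A^r]_{jj}$ for every integer $r$ and every pair $i,j$, and substituting into the power series expansion \eqref{espansione_expA} gives $[e^{\beta A}]_{ii}=[e^{\beta A}]_{jj}$ for \emph{all} $\beta\in\R$, a fortiori for all $\beta\in I$. For the converse, I would fix two vertices $i,j\in V$ and consider the function
\[
 f_{ij}(\beta)\;=\;[e^{\beta A}]_{ii}-[e^{\beta A}]_{jj}\;=\;\sum_{r\ge 0}\frac{\beta^r}{r!}\bigl([A^r]_{ii}-[A^r]_{jj}\bigr).
\]
Using the spectral decomposition $A=\sum_k\lambda_k P_k$ (with $\lambda_k$ the distinct eigenvalues and $P_k$ the orthogonal eigenprojections) one can alternatively write $f_{ij}(\beta)=\sum_k\bigl([P_k]_{ii}-[P_k]_{jj}\bigr)e^{\beta\lambda_k}$; either description makes clear that $f_{ij}$ extends to an entire function of $\beta\in\C$, the defining power series having infinite radius of convergence.

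The crucial step is then an appeal to analyticity. By hypothesis $f_{ij}$ vanishes on the set $I$, which contains an accumulation point $\beta_0\in\R$. Since $f_{ij}$ is holomorphic on all of $\C$, in particular in a neighborhood of $\beta_0$, the identity theorem for holomorphic functions forces $f_{ij}\equiv 0$ on $\C$. (One can also stay inside the reals: $f_{ij}$ is real-analytic on the connected set $\R$, and a real-analytic function whose zero set has an accumulation point vanishes identically.) Comparing Taylor coefficients of $f_{ij}$ at $\beta=0$ now gives $[A^r]_{ii}=[A^r]_{jj}$ for every integer $r\ge 0$. Since $i$ and $j$ were arbitrary, $\G$ is walk-regular, which completes the argument.

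I do not anticipate a real obstacle here beyond careful bookkeeping; the one point that deserves a moment's attention is that the accumulation point of $I$ must lie in the domain where $f_{ij}$ is known to be analytic, but that domain is all of $\C$, so the bare hypothesis ``$I$ has an accumulation point'' is precisely what the identity theorem consumes. It is worth remarking that this hypothesis is sharp in the stated form: a single sequence $\beta_n\to\beta_0$ of distinct real values already suffices, while away from an accumulation point the zero set of a nonzero entire function is discrete, so the conclusion can genuinely fail for sets $I$ lacking one.
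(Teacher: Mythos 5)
Your argument is correct: the forward direction via the Taylor expansion and the converse via analyticity of $\beta\mapsto[e^{\beta A}]_{ii}-[e^{\beta A}]_{jj}$ together with the identity theorem, followed by comparison of Taylor coefficients at $\beta=0$, is exactly the standard proof. The paper itself does not reprove this statement (it is quoted from Benzi's note), and your approach coincides with the analyticity argument used in that cited reference, so there is nothing to add.
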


In the same paper, it was conjectured that if a graph $\G$ is $\beta$-subgraph regular for only one value of $\beta$, then it is necessarily walk-regular (also in \cite{Estrada} this was conjectured for the special case $\beta=1$). The general case was shown to be false by Sullivan et al. in \cite{HKSwalkregularity,KKScounterexample}, 
by exhibiting a (infinite) family of non walk-regular graphs (also, non degree-regular), for each of which there exists a value of $\beta$ such that the graph is $\beta$-subgraph regular; incidentally, this counterexample also falsified an incorrect proof of the above-mentioned
conjecture given in \cite{EdP}.
Nevertheless, for any non walk-regular graph $\G$ there can be only finitely many values of $\beta$ that make $\G$ $\beta$-subgraph regular.

In  \cite{KKScounterexample}, the following conjecture was put forth:

\begin{conj}[Kloster, Kr\'al, Sullivan \cite{KKScounterexample}, Conjecture 5]
	A graph $\G$ is walk-regular if and only if there exists a rational $\beta  > 0$ such that $\G$ is $\beta$-subgraph regular.
	\label{conjec5}
\end{conj}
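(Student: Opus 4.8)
The plan is to deduce Conjecture~\ref{conjec5} as a special case of the main theorem announced in the introduction, namely that for algebraic $\beta$ two vertices $i,j$ satisfy $[e^{\beta A}]_{ii}=[e^{\beta A}]_{jj}$ if and only if $i$ and $j$ are cospectral. Since every rational $\beta>0$ is in particular algebraic, and since $\G$ is walk-regular precisely when all pairs of vertices are cospectral, the equivalence in Conjecture~\ref{conjec5} follows immediately once the stronger statement is in hand. Thus the real content is the algebraic-$\beta$ result, and I would organize the proof around it.

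First I would set up the spectral decomposition $A=\sum_{k=1}^m \lambda_k P_k$, where $\lambda_1>\dots>\lambda_m$ are the distinct eigenvalues and $P_k$ is the orthogonal projector onto the $\lambda_k$-eigenspace, so that $[e^{\beta A}]_{ii}=\sum_{k=1}^m [P_k]_{ii}\,e^{\beta\lambda_k}$. Two vertices $i,j$ are cospectral exactly when $[P_k]_{ii}=[P_k]_{jj}$ for all $k$ (this is the standard characterization I would recall in Section~2), so the ``if'' direction is trivial. For the ``only if'' direction, the hypothesis $[e^{\beta A}]_{ii}=[e^{\beta A}]_{jj}$ becomes
\begin{equation*}
\sum_{k=1}^m \bigl([P_k]_{ii}-[P_k]_{jj}\bigr)\,e^{\beta\lambda_k}=0.
\end{equation*}
The eigenvalues $\lambda_k$ are algebraic numbers (roots of the characteristic polynomial of $A$, which has integer entries), the coefficients $[P_k]_{ii}-[P_k]_{jj}$ are algebraic as well, and $\beta$ is algebraic by assumption, so the exponents $\beta\lambda_k$ are distinct algebraic numbers. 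This is precisely the setting of the Lindemann–Weierstrass theorem in its linear-independence form: the numbers $e^{\beta\lambda_1},\dots,e^{\beta\lambda_m}$ are linearly independent over the field of algebraic numbers. Hence every coefficient must vanish, i.e.\ $[P_k]_{ii}=[P_k]_{jj}$ for all $k$, which says $i$ and $j$ are cospectral. This is the argument the paper attributes to Theorem~\ref{prop_main}, and it is what I would prove in detail in Section~4 after stating Lindemann–Weierstrass in Section~3.

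The main obstacle, and the step I would be most careful about, is making sure the diagonal projector entries $[P_k]_{ii}$ are genuinely algebraic numbers rather than merely real: the $P_k$ are the Lagrange interpolation polynomials $P_k=\prod_{\ell\neq k}(A-\lambda_\ell I)/(\lambda_k-\lambda_\ell)$ evaluated at $A$, so each entry of $P_k$ lies in the number field $\Q(\lambda_1,\dots,\lambda_m)$ and is in particular algebraic; I would spell this out so the hypotheses of Lindemann–Weierstrass are unambiguously met. A secondary point to handle cleanly is the reduction itself: one must note that $\beta\neq0$ (already assumed in the paper) so that the $\lambda_k$ being distinct forces the $\beta\lambda_k$ to be distinct, and that the characterization of walk-regularity as ``all vertices pairwise cospectral'' together with the characterization of cospectral vertices via equal diagonal projector entries are exactly the equivalences set up in Section~2. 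With those in place, Conjecture~\ref{conjec5} — and indeed the original Estrada conjecture for rational (even algebraic) $\beta$ — drops out with no further work.
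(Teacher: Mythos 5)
Your proposal is correct, and the overall skeleton matches the paper's: reduce Conjecture~\ref{conjec5} to the statement that for algebraic $\beta\neq 0$, $\beta$-subgraph equivalent vertices are cospectral (rational $\beta$ being a special case, and walk-regularity being ``all pairs cospectral''), write the diagonal entry of $e^{\beta A}$ as an algebraic-coefficient combination of the exponentials $e^{\beta\mu_h}$ at the distinct eigenvalues, and invoke Lindemann--Weierstrass to force the coefficients attached to $i$ and $j$ to coincide. Where you genuinely diverge is in how the algebraicity of those coefficients is certified. The paper builds a diagonalizing matrix $Q$ with algebraic entries by running Gaussian elimination over $\Qalg$ on each $A-\mu_h I$ (Proposition~\ref{prop_kernel}) and then notes $Q^{-1}$ is algebraic too (Proposition~\ref{prop_inverse}), so that $C_{h\,i}=\sum_{k\in\II_h} q_{ik}\widehat{q}_{ki}$ is algebraic; you instead identify the coefficients as the diagonal entries $[P_h]_{ii}$ of the spectral projectors and observe that $P_h=\prod_{\ell\neq h}(A-\mu_\ell I)/(\mu_h-\mu_\ell)$ is a polynomial in $A$ with coefficients in $\Q(\mu_1,\dots,\mu_d)$, hence has algebraic entries. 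Your route is somewhat cleaner and more self-contained for this particular statement (it bypasses both auxiliary propositions and makes the equivalence ``cospectral $\iff$ equal projector diagonals'' transparent); it does require $A$ diagonalizable for the Lagrange formula to give the projectors, which is automatic here since the conjecture concerns undirected graphs. The paper's explicit algebraic $Q$ earns its keep elsewhere, namely in part 2 of Theorem~\ref{main_thm}, where after Gram--Schmidt it yields $C_{1\,i}=q_{i\,1}^2$ and hence the eigenvector-centrality conclusion, but that is not needed for Conjecture~\ref{conjec5} itself.
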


We will show that this conjecture is true in a stronger form, by requiring $\beta$ only to be any algebraic number. Also, our result applies not just to undirected graphs, but also to directed graphs with diagonalizable adjacency matrices. 

We recall that in the case of a directed graph the interpretation of $[A^r]_{ii}$ in terms of closed walks remains valid, provided
that a ``closed walk" is understood as a directed walk starting and ending at the same vertex. 

\vspace{0.3cm}

For $\G$ either a directed or an undirected graph, we introduce the following terminology.

\begin{defin}Two vertices $i,j$ of $\G$ are \textit{cospectral} if for every integer $r >0$, $[A^r]_{ii} = [A^r]_{jj}$.
\end{defin}

Observe that by the Hamilton-Cayley Theorem, it is sufficient to check $n-1$ values of $r$ to determine cospectrality. If there exists an automorphism $\varphi$ of $\G$ such that $\varphi(i)=j$, then $i,j$ are cospectral; however, there are examples of cospectral vertices which are not  related by an automorphism. One of such examples can be found in \cite{Schwenk}, which was the first to make use of cospectral vertices, although without defining them explicitly. See \cite{GSstronglycospectral} for many other equivalent conditions for two vertices to be cospectral.  

\begin{defin} Two vertices $i,j$ of $\G$ are \textit{$\beta$-subgraph equivalent} if $[e^{\beta A}]_{ii} = [e^{\beta A}]_{jj}$.
\end{defin}

From the Taylor series expansion it is clear that if $i,j$ are cospectral, then they are $\beta$-subgraph equivalent for all $\beta$. We will show that for $\G$ an undirected graph, or a directed graph with diagonalizable adjacency matrix, if $\beta$ is an algebraic number and $i,j$ are $\beta$-subgraph equivalent, then they are cospectral.

This implies a proof of Conjecture \ref{conjec5}, because a graph is walk-regular if and only if all its vertices are cospectral, and it is $\beta$-subgraph regular if and only if all its vertices are $\beta$-subgraph equivalent.

\section{Algebraic numbers and the Lindemann-Weierstrass Theorem}

We recall that $a\in \C$ is an algebraic number if there exists a nonzero polynomial $p(x)\in \Q[x]$ such that $p(a)=0$. The set of all algebraic numbers is a field and it will be denoted by $\Qalg$.

\begin{prop}Let $B$ be a $n \times n$ matrix with all its entries $B_{ij} \in \Qalg$. Let $\Ker(B)\subseteq \C^n$ be the null-space of $B$ with $\dim \Ker(B)=d \geq 1$. Then there exists a basis $\{v_1,\ldots , v_d\}$ of $\Ker(B)$ such that all the entries of each vector are algebraic numbers.
	\label{prop_kernel}
\end{prop}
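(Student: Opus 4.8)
The plan is to run Gaussian elimination over the field $\Qalg$. Since $\Qalg$ is a field, every arithmetic operation used when row-reducing $B$ — additions, subtractions, and divisions by nonzero pivots — stays inside $\Qalg$. Hence the reduced row echelon form $R$ of $B$ has all its entries in $\Qalg$, and because elementary row operations do not change the null-space, $\Ker(R)=\Ker(B)$.

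Next I would write down the standard basis of $\Ker(R)$ attached to the non-pivot (``free'') columns. Say $R$ has pivots in columns $c_1<\cdots<c_r$ and the remaining $n-r$ columns are free. For each free column index $j$, define $v^{(j)}\in\C^n$ whose $j$-th coordinate is $1$, whose coordinate in pivot column $c_k$ is $-R_{k,j}$, and whose other coordinates are $0$. Each $v^{(j)}$ has algebraic entries, lies in $\Ker(R)=\Ker(B)$, and the family $\{v^{(j)}\}$ is linearly independent (even over $\C$), since its restriction to the free coordinates is the standard basis. This gives $n-r$ algebraic vectors in $\Ker(B)$.

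Finally I would verify that $n-r=d$, so that the $v^{(j)}$ actually span $\Ker(B)$. This holds because the rank of a fixed matrix with entries in $\Qalg$ does not depend on whether one works over $\Qalg$ or over $\C$: $r$ is the size of a largest nonvanishing minor of $B$, so $\operatorname{rank}_{\Qalg}(B)=\operatorname{rank}_{\C}(B)$, whence $n-r = n-\operatorname{rank}_{\C}(B) = \dim_{\C}\Ker(B) = d$. Therefore $\{v^{(1)},\ldots\}$, reindexed as $\{v_1,\ldots,v_d\}$, is a basis of $\Ker(B)$ consisting of vectors with algebraic entries.

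I do not expect a genuine obstacle here: the statement is essentially the observation that Gaussian elimination is a field-theoretic procedure. The only point worth a line of care is the field-independence of the rank (equivalently, uniqueness of reduced row echelon form), which is what guarantees that the $\Qalg$-span of the constructed vectors is the full space $\Ker(B)\subseteq\C^n$ rather than a proper subspace.
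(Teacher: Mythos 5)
Your argument is correct and follows the same route as the paper: row-reduce over the field $\Qalg$ to produce a kernel basis with algebraic entries, then note that it remains a basis of $\Ker(B)\subseteq\C^n$. You simply make explicit two points the paper leaves implicit (the free-variable construction of the basis and the field-independence of the rank), which is fine but not a different method.
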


\begin{proof}We can see $B$ as a matrix over the field $\Qalg$. Gaussian elimination holds 
	on every ground field, so we can apply it to the rows of $B$ and find a basis of the null-space $\{v_1,\ldots , v_d\}$, with $v_i \in \Qalg^n$.
	Since $\Qalg \subseteq \C$, we have that $\{v_1,\ldots , v_d\}$ is also a basis for $\Ker(B)$ when viewed as a complex-valued vector space.
\end{proof}

\begin{prop}Let $B$ be a $n\times n$ matrix with all its entries $B_{ij} \in \Qalg$. If $B$ is non-singular, then the inverse matrix $B^{-1}$ has all its entries in $\Qalg$.
	\label{prop_inverse}
\end{prop}

\begin{proof} The inverse of $B$ can be computed explicitly: $[B^{-1}]_{ij} = (-1)^{i+j}\,  \frac{\det(B\setminus(j,i))}{\det(B)} $, where $\det(B\setminus(j,i))$ is the minor of the matrix obtained removing row $j$ and column $i$. It is clear that 
	$\det(B)$ and $\det(B\setminus(j,i))$ are both algebraic numbers, so $[B^{-1}]_{ij} \in \Qalg$ for every $i,j$.
	
\end{proof}

We now introduce the Lindemann-Weierstrass Theorem, which is the central ingredient for the main result. The theorem, proven in 1885 combining the work of Hermite, Lindemann and Weierstrass, is a milestone of Transcendental Number Theory. We state it here in a formulation due to Baker \cite{Baker}.

\begin{teo}[Lindemann-Weierstrass]
	Let $a_1, \ldots, a_n$ be distinct algebraic numbers. Then the exponentials $e^{a_1},\ldots, e^{a_n}$ are linearly independent over the algebraic numbers. In other words, for every choice of $c_1,\ldots c_n \in \overline{\Q}$, not necessarily distinct, we have:
	\begin{equation}
	c_1 e^{a_1} \,+ \,\cdots \,+\, c_n e^{a_n}\,=\,0 \; \; \; \iff \; \;\; c_i = 0 \;\; \;\;\forall\; \;1\leq i \leq n. \end{equation}
\end{teo}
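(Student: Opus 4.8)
\emph{Proof strategy.} This is the classical Lindemann--Weierstrass theorem, so the plan is to recall the two ingredients behind it: an \emph{algebraic normalization} that turns an arbitrary $\Qalg$-linear relation among the $e^{a_i}$ into a very rigid one, followed by Hermite's \emph{analytic} device of an auxiliary polynomial. I would begin by assuming, for contradiction, that $c_1e^{a_1}+\cdots+c_ne^{a_n}=0$ with the $a_i$ distinct algebraic numbers and the $c_i\in\Qalg$ not all zero; after discarding vanishing terms every $c_i\neq0$, and after multiplying through by $e^{-a_1}$ one may assume $a_1=0$, so that a pure constant term $c_1\neq0$ is present.

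\emph{Normalization.} Pass to a finite Galois extension $L/\Q$, with group $\mathrm{Gal}(L/\Q)=\{g_1=\mathrm{id},\ldots,g_h\}$, containing all the $a_i$ and $c_i$, and form $\prod_{g}\bigl(\sum_i g(c_i)e^{g(a_i)}\bigr)$; the $g=\mathrm{id}$ factor vanishes, so the whole product is $0$. Multiplying it out and collecting terms according to the value of the exponent $\sum_{g} g(a_{\phi(g)})$, as $\phi$ runs over the maps from $\mathrm{Gal}(L/\Q)$ to $\{1,\ldots,n\}$, produces a relation $\sum_j b_j e^{\beta_j}=0$ in which the $\beta_j$ are distinct algebraic numbers, the set $\{\beta_j\}$ is $\mathrm{Gal}(L/\Q)$-stable, and the coefficients — being symmetric combinations of conjugates — are rational, hence (after clearing denominators) integers, compatibly with the Galois action on the exponents (constant on each orbit after a standard further reduction). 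This relation is non-trivial: fix the total order $\prec$ on $\C$ comparing real parts and breaking ties by imaginary parts, so that $z\prec z'\Rightarrow z+w\prec z'+w$; for each $g$ a unique $i$ maximizes $g(a_i)$, the sum of these maxima is the unique $\prec$-largest exponent occurring in the expansion, and its coefficient is a product of nonzero factors. A further argument — this is the subtle step — shows that $0$ may be taken to occur among the $\beta_j$ with $b_0\neq0$.

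\emph{Hermite's auxiliary integral.} For a polynomial $f$ of degree $N$ put $I(t)=\int_0^t e^{t-u}f(u)\,du$; integration by parts gives $I(t)=e^tF(0)-F(t)$ with $F=f+f'+\cdots+f^{(N)}$. Choose $\ell\in\N$ with every $\ell\beta_j$ an algebraic integer, let $p$ be a large prime, and take
\[
f(x)=\ell^{N}\,\frac{x^{p-1}\prod_{\beta_j\neq 0}(x-\beta_j)^{p}}{(p-1)!},\qquad N=\deg f.
\]
Since $\sum_j b_j e^{\beta_j}=0$, the quantity $J:=\sum_j b_j I(\beta_j)$ equals $-\sum_j b_j F(\beta_j)$. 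As $f$ vanishes to order $p$ at each $\beta_j\neq0$, every $f^{(k)}(\beta_j)$ there is an algebraic integer divisible by $p$, so summing over a Galois orbit makes $\sum_{\beta_j\neq0}b_j F(\beta_j)$ a rational integer divisible by $p$. At $0$, however, $f$ vanishes only to order $p-1$, and $f^{(p-1)}(0)=\pm\,\ell^{N}\prod_{\beta_j\neq0}\beta_j^{p}$ is a rational integer \emph{not} divisible by $p$ once $p$ exceeds $\ell$, $|b_0|$ and the relevant norms, while $f^{(k)}(0)$ is a multiple of $p$ for $k\geq p$. Hence $J$ is a rational integer with $J\equiv-b_0 f^{(p-1)}(0)\not\equiv0\pmod p$, so $|J|\geq1$. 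On the other hand $|I(t)|\leq|t|\,e^{|t|}\max_{[0,t]}|f|$ and the chosen $f$ has $\max_{[0,t]}|f|\leq C^{p}/(p-1)!$ for a constant $C$ independent of $p$, so $|J|\to0$ as $p\to\infty$; taking $p$ large contradicts $|J|\geq1$. Baker's formulation, with $c_i\in\Qalg$ rather than in $\Q$, is covered because the argument admitted $c_i\in\Qalg$ from the outset.

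\emph{Main obstacle.} The analytic pincer is the classical Hermite computation and, once the normalization is set up, essentially routine. The real work is the normalization: converting a general $\Qalg$-linear dependence into one with rational integer coefficients supported on a conjugation-stable exponent set that still contains a rational point (here $0$) carrying a nonzero coefficient, and certifying that multiplying out $\prod_g(\cdots)$ did not collapse the relation to $0\equiv0$. Controlling which terms can cancel in that product — in particular guaranteeing $b_0\neq0$ — is the crux of the bookkeeping and is what makes a self-contained proof long; in the present paper the theorem is invoked as a black box, and the full argument is the one in Baker \cite{Baker}.
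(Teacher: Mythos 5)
The paper does not actually prove this theorem: its ``proof'' is a pointer to the literature (Siegel, Baker, Chudnovsky/Nathanson), so the result is used as a black box and there is no internal argument to compare yours against. Judged on its own, your sketch follows the classical Hermite-style route, and most of it (the Galois-product normalization, the ordering argument showing the product relation is not identically zero, the integral $I(t)=e^tF(0)-F(t)$ and the size-versus-divisibility pincer) is the standard machinery.

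There is, however, a genuine gap, and it sits exactly where you flag it. Your choice of auxiliary polynomial $f(x)=\ell^{N}x^{p-1}\prod_{\beta_j\neq0}(x-\beta_j)^{p}/(p-1)!$ makes the whole contradiction hinge on the exponent $0$ appearing among the $\beta_j$ with a nonzero coefficient $b_0$: the lower bound $|J|\geq 1$ comes from $J\equiv -b_0 f^{(p-1)}(0)\pmod p$. But your nontriviality argument only shows that the coefficient attached to the $\prec$-largest exponent $\sum_g\max_i g(a_i)$ survives the expansion of $\prod_g\bigl(\sum_i g(c_i)e^{g(a_i)}\bigr)$; that exponent has no reason to be $0$, and even though you normalized $a_1=0$ so that the choice $\phi\equiv 1$ contributes exponent $0$ with coefficient $\prod_g g(c_1)\neq0$, other choices $\phi$ can produce total exponent $0$ and cancel it. So ``$0$ may be taken to occur among the $\beta_j$ with $b_0\neq0$'' is precisely the unproven step, and the argument does not close as written. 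The standard fix -- and the one in Baker's book, which the paper cites -- avoids ever needing a distinguished zero exponent: after the same normalization one takes the $n$ polynomials $f_i(x)=\ell^{np}\bigl((x-\beta_1)\cdots(x-\beta_n)\bigr)^{p}/(x-\beta_i)$, forms $J_i=\sum_k b_k I_i(\beta_k)$, and shows the product $J_1\cdots J_n$ is a nonzero rational integer divisible by $\bigl((p-1)!\bigr)^{n-1}$-type factorials while being bounded above by $C^{p}$; the nonvanishing is guaranteed by the orbit structure and the factor $f_i^{(p-1)}(\beta_i)$, not by a constant term. Either import that multi-polynomial version, or supply the missing argument that $b_0\neq0$ can be arranged; as it stands the single-polynomial variant is incomplete.
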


\begin{proof}
	See for instance \cite{Siegel} for a proof with an historical perspective, \cite{Baker} for a simpler argument or \cite{Nathanson} for a proof using Padé approximants.
\end{proof}

Notice that the result has many important consequences: i.e., the transcendence of $e$ (choosing $a_1=1$ and $a_2=0$) and the transcendence of $\pi$ (choosing $a_1=i \pi$ and $a_2=0$). For the history of the Lindemann-Weierstrass Theorem, see \cite{Brezinski}.

\section{Main result}

Let $\G$ be a graph with adjacency matrix $A$, and assume that $A$ is diagonalizable, say $A=Q D Q^{-1}$. We are mostly interested in undirected graphs, where the latter property is always true (because $A$ is real symmetric); nonetheless, we can extend the result at least to directed graphs with diagonalizable adjacency matrix.
\vspace{0.3cm}

Let $q_{ij}$ be the $(i,j)^{th}$ entry of $Q$ and $\widehat{q}_{ij}$ that of $Q^{-1}$. Let $(\lambda_1,\ldots, \lambda_n)=\text{diag}(D)$ be the (possibly non-distinct) eigenvalues of $A$. We will use $(\mu_1,\ldots ,\mu_d)$ with $d\leq n$ to denote the eigenvalues without repetition, with $\mu_i$ of multiplicity $m_i$. Up to permutation, we can assume $\lambda_1=\cdots=\lambda_{m_1}=\mu_1$, $\lambda_{m_1+1}=\cdots=\lambda_{m_1+m_2}=\mu_2$, and so on. For ease of notation, let $\II_h=\{ k \,|\, \lambda_k =\mu_h\}$ be the set of all indices of the multiple occurrences of eigenvalue $\mu_h$.

\vspace{0.3cm}

From $A^r = Q D^r Q^{-1}$ and $e^{\beta A} = Q \,e^{\beta D} \, Q^{-1}$ we can group equal eigenvalues together to obtain:

\begin{equation}
\begin{aligned}
[A^r]_{ii} = &\sum\limits_{k=1}^n \,q_{ik}\;\widehat{q}_{ki} \, \lambda_k^r = \left(\,\sum\limits_{k\in\II_1} q_{ik}\;\widehat{q}_{ki} \right)\mu_1^r + \cdots + \left(\,\sum\limits_{k\in\II_d} q_{ik}\;\widehat{q}_{ki} \right)\mu_d^r=\\
=&\;C_{1\,i} \;\mu_1^r \,+\, C_{2\,i}\; \mu_2^r\,+\,\cdots\,+\,C_{d\,i} \;\mu_d^r\,;
\end{aligned}
\label{Ar_autoval}
\end{equation}

\begin{equation}[e^{\beta A}]_{ii} = \sum\limits_{k=1}^n \,q_{ik}\;\widehat{q}_{ki}  \;e^{\beta\lambda_k} = C_{1\,i}\;e^{\beta\mu_1} \,+\,C_{2\,i}\;e^{\beta\mu_2} \,+\, \cdots \,+\, C_{d\,i}\;e^{\beta\mu_d}\,
\label{expA_autoval}
\end{equation}

where $C_{h\,i}= \sum\limits_{k\in\II_h} q_{ik}\;\widehat{q}_{ki}$. The next proposition is the key argument in the proof of the main result.

\begin{teo}
	Let $\G$ be a directed graph with adjacency matrix $A$, and assume that $A$ is diagonalizable. Let $\beta\neq 0$ be an algebraic number. If two vertices $i,j$ are $\beta$-subgraph equivalent, then they are cospectral.
	\label{prop_main}
\end{teo}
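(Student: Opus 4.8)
The plan is to expand both $[e^{\beta A}]_{ii}$ and $[A^r]_{ii}$ over the distinct eigenvalues exactly as in (\ref{Ar_autoval}) and (\ref{expA_autoval}), and then to apply the Lindemann--Weierstrass Theorem to show that the hypothesis $[e^{\beta A}]_{ii} = [e^{\beta A}]_{jj}$ already forces $C_{hi} = C_{hj}$ for every $h$. For this to work I need two ingredients: that the exponents $\beta\mu_1,\ldots,\beta\mu_d$ are \emph{distinct algebraic} numbers, and that the coefficients $C_{hi}$ lie in $\Qalg$.

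First I would note that $\mu_1,\ldots,\mu_d$ are algebraic, being roots of the characteristic polynomial $\det(xI-A)\in\Z[x]$ (or $\Qalg[x]$ in the weighted case). Since $\beta\neq 0$ is algebraic and the $\mu_h$ are pairwise distinct, the numbers $\beta\mu_1,\ldots,\beta\mu_d$ are pairwise distinct elements of $\Qalg$. Next comes the key step: the coefficient $C_{hi}=\sum_{k\in\II_h} q_{ik}\widehat q_{ki}$ is, because $A$ is diagonalizable, the $(i,i)$ entry of the spectral projector $P_h$ onto the $\mu_h$-eigenspace, and $P_h=\prod_{l\neq h}(A-\mu_l I)/(\mu_h-\mu_l)$ is a polynomial in $A$ with algebraic coefficients; hence $P_h$ has algebraic entries and $C_{hi}\in\Qalg$. (Alternatively, one can invoke Proposition~\ref{prop_kernel} applied to each $A-\mu_h I$ to produce an eigenbasis matrix $Q$ with entries in $\Qalg$, whence $Q^{-1}$ also has algebraic entries by Proposition~\ref{prop_inverse}, and $C_{hi}\in\Qalg$ follows; this is presumably why those two propositions were set up.)

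With these in hand, the hypothesis together with (\ref{expA_autoval}) gives the relation $\sum_{h=1}^d (C_{hi}-C_{hj})\,e^{\beta\mu_h}=0$, a vanishing $\Qalg$-linear combination of $e^{\beta\mu_1},\ldots,e^{\beta\mu_d}$. Since the $\beta\mu_h$ are distinct algebraic numbers, the Lindemann--Weierstrass Theorem applies and forces $C_{hi}-C_{hj}=0$ for every $h$. Substituting back into (\ref{Ar_autoval}) yields $[A^r]_{ii}=\sum_h C_{hi}\mu_h^r=\sum_h C_{hj}\mu_h^r=[A^r]_{jj}$ for every integer $r>0$, i.e.\ $i$ and $j$ are cospectral.

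I expect the only real obstacle to be the second step: verifying that the $C_{hi}$ are genuinely algebraic rather than merely complex. Once that is secured, the rest is the grouping of eigenvalues already recorded in (\ref{Ar_autoval})--(\ref{expA_autoval}) plus a single application of Lindemann--Weierstrass. Note that the argument uses $\beta\neq 0$ only to guarantee the $\beta\mu_h$ are distinct, and uses diagonalizability of $A$ only to express $[A^r]_{ii}$ and $[e^{\beta A}]_{ii}$ with the \emph{same} coefficients $C_{hi}$ over the eigenvalues.
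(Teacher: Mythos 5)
Your proposal is correct and follows essentially the same route as the paper: group the spectral expansion over the distinct eigenvalues, check that the exponents $\beta\mu_h$ are distinct algebraic numbers and that the coefficients $C_{h\,i}$ are algebraic, apply Lindemann--Weierstrass to get $C_{h\,i}=C_{h\,j}$, and conclude cospectrality from (\ref{Ar_autoval}). The only variation is your primary argument for $C_{h\,i}\in\Qalg$ via the spectral projector $P_h=\prod_{l\neq h}(A-\mu_l I)/(\mu_h-\mu_l)$, a polynomial in $A$ with algebraic coefficients, which is a clean and slightly more direct alternative to the paper's construction of an algebraic eigenbasis matrix $Q$ through Propositions \ref{prop_kernel} and \ref{prop_inverse} (the route you also correctly identify in your parenthetical).
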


\begin{proof}
	We would like to apply the Lindemann-Weierstrass Theorem, so we need to prove that (with the above notation) the exponents  $\beta \mu_h$ and the coefficients $C_{h\,i} = \sum\limits_{k\in\II_h} q_{ik}\;\widehat{q}_{ki} $ are all algebraic numbers.
	\vspace{0.3cm}
	
	The entries of $A$ are either 0 or 1, so the characteristic polynomial $P_A(x)=\det(xI-A)$ has integer coefficients. The roots of $P_A(x)$ are $\mu_1,\ldots ,\mu_d$, so  they all are algebraic numbers. Since $\beta\in\Qalg$, then also $\beta\mu_1,\ldots, \beta\mu_d$ are algebraic numbers.
	\vspace{0.3cm}
	
	Observe that the coefficients $C_{h\,i}$ in equations (\ref{Ar_autoval}), (\ref{expA_autoval}) can be obtained for many possible choices of $Q$, as long as $A=QDQ^{-1}$ holds. We will construct an appropriate $Q$ with algebraic numbers in all entries.
	\vspace{0.25cm}
	
	For every eigenvalue $\mu_h$, let $B=A-\mu_h I$. Using Proposition \ref{prop_kernel} we can find vectors $\{v_1,\ldots ,v_{m_h}\}$ which form a basis of $\Ker(B)$ and such that all their components are in $\Qalg$.  Hence, we can 
	construct a matrix $Q$ which has the $m_h$ columns relative to the eigenvalue $\mu_h$
	equal to the above-defined vectors $\{v_1,\ldots ,v_{m_h}\}$. 
	$Q$ has all the entries in $\Qalg$, and so does its inverse $Q^{-1}$ by Proposition \ref{prop_inverse}. This implies that $\forall \, h,i$, the coefficients $C_{h\,i}=\sum\limits_{k\in\II_h} q_{ik}\;\widehat{q}_{ki} $ are algebraic numbers.
	
	
	We can now prove the result. The hypothesis is $[e^{\beta A}]_{ii} = [e^{\beta A}]_{jj}$ which we can write as in equation (\ref{expA_autoval}) as:
	
	\begin{equation*}
	\begin{aligned}
	[e^{\beta A}]_{ii} = \sum\limits_{k=1}^n \,q_{ik}\;\widehat{q}_{ki}  \;e^{\beta\lambda_k} &= \,C_{1\,i}\;e^{\beta\mu_1}\, +\,C_{2\,i}\;e^{\beta\mu_2} \,+ \,\cdots \,+\, C_{d\,i}\;e^{\beta\mu_d},\\
	[e^{\beta A}]_{jj} = \sum\limits_{k=1}^n \,q_{jk}\;\widehat{q}_{kj}  \,e^{\beta\lambda_k} &=\, C_{1\,j}\;e^{\beta\mu_1} \,+\,C_{2\,j}\;e^{\beta\mu_2}\, +\, \cdots\, +\, C_{d\,j}\;e^{\beta\mu_d},
	\end{aligned}
	\end{equation*}
	
	\begin{equation*}
	0=[e^{\beta A}]_{ii}-[e^{\beta A}]_{jj} =\, (C_{1\,i}-C_{1\,j})\;e^{\beta\mu_1} \,+\, \cdots\, +\, (C_{d\,i}-C_{d\,j})\;e^{\beta\mu_d}.    
	\end{equation*}
	
	Since for every $h$ the exponents $\beta\mu_h$ and coefficients $C_{h\,i},\, C_{h\,j}$ are algebraic numbers, and also $\beta \mu_h$ are all distinct because $\beta \neq 0$ and $\mu_h$ are pairwise distinct, we can apply the Lindemann-Weierstrass Theorem to obtain that $C_{h\,i}=C_{h\,j}\;\; \forall \, 1\leq h\leq d$.
	From this it follows that for all positive integers $r$,
	\begin{equation}
	\begin{aligned}
	[A^r]_{ii} =&\;C_{1\,i} \;\mu_1^r + C_{2\,i}\; \mu_2^r+\cdots+C_{d\,i} \;\mu_d^r=\\
	=&\;C_{1\,j} \;\mu_1^r + C_{2\,j}\; \mu_2^r+\cdots+C_{d\,j} \;\mu_d^r=[A^r]_{jj}\,,
	\end{aligned}
	\end{equation}
	which means that $i,j$ are cospectral in $\G$. The proof is complete.
	
\end{proof}

\textbf{Remark.} Observe that if $\G$ is an undirected graph, then its adjacency matrix $A$ is symmetric and therefore it is diagonalizable; hence the result of Theorem \ref{prop_main} can be applied.

\vspace{0.3cm}

We will now prove the Conjectures \ref{conjec1} and \ref{conjec5} stated in Section 2:

\begin{teo}[Main Result] 
	Let $\beta>0$ be an algebraic number and let $\G$ be a connected undirected graph with adjacency matrix $A$.
	\begin{enumerate} 
		\item 	$\G$ is $\beta$-subgraph regular if and only if $\G$ is walk-regular.
		\item If two vertices $i$, $j$ are  $\beta$-subgraph equivalent, then the degree and eigenvector centralities of $i$ and $j$ are equal.
		\item If $\G$ is $\beta$-subgraph regular, then the degree and eigenvector centralities are also identical for all nodes.
	\end{enumerate}
	\label{main_thm}
\end{teo}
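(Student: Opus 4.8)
The strategy is to derive all three statements from Theorem \ref{prop_main} (applicable here because the adjacency matrix of an undirected graph is symmetric, hence diagonalizable), from the Perron--Frobenius theorem, and from the elementary remark — already exploited in the proof of Theorem \ref{prop_main} — that the coefficients $C_{h\,i}$ appearing in (\ref{Ar_autoval}) and (\ref{expA_autoval}) do not depend on the particular diagonalizing matrix $Q$: they are the diagonal entries $[P_h]_{ii}$ of the spectral projector $P_h$ onto the $\mu_h$-eigenspace, and $\sum_h P_h = I$. For part 1, the ``if'' direction is the immediate consequence of the Taylor expansion (\ref{espansione_expA}) already noted in Section 2. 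For the ``only if'' direction, assume $\G$ is $\beta$-subgraph regular; then any two vertices $i,j$ are $\beta$-subgraph equivalent, so Theorem \ref{prop_main} makes them cospectral, i.e.\ $[A^r]_{ii}=[A^r]_{jj}$ for every positive integer $r$, which is precisely walk-regularity.

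For part 2, let $i,j$ be $\beta$-subgraph equivalent. By Theorem \ref{prop_main} they are cospectral, and in fact its proof yields $C_{h\,i}=C_{h\,j}$ for all $h$ (alternatively, this follows from cospectrality by a Vandermonde argument on the distinct values $\mu_1,\dots,\mu_d$, together with the trivial identity $[A^0]_{ii}=[A^0]_{jj}$). Since $A$ has $0/1$ entries, the degree of $i$ equals $[A^2]_{ii}=\sum_k A_{ik}^2=\sum_k A_{ik}$, so cospectrality at $r=2$ gives $\deg(i)=\deg(j)$ and hence equal degree centralities. For the eigenvector centrality, use that $\G$ is connected, so $A$ is irreducible and Perron--Frobenius provides a simple largest eigenvalue $\mu_1$ with a strictly positive eigenvector $x$; the eigenvector centrality of a vertex $k$ is a fixed normalization of $x_k$. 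Diagonalize $A$ with an \emph{orthogonal} $Q$ (possible since $A$ is symmetric) whose first column is $x/\|x\|_2$; then $\widehat{q}_{ki}=q_{ik}$ and, as $\II_1=\{1\}$, one gets $C_{1\,i}=q_{i1}^2=x_i^2/\|x\|_2^2$. From $C_{1\,i}=C_{1\,j}$ we deduce $x_i^2=x_j^2$, and positivity of $x$ forces $x_i=x_j$, so the eigenvector centralities of $i$ and $j$ coincide. Part 3 is then immediate: $\beta$-subgraph regularity means every pair of vertices is $\beta$-subgraph equivalent, so part 2 applies to every pair.

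The only step requiring more than formal bookkeeping is the eigenvector-centrality claim in part 2: one must recognize $C_{1\,i}$ as the squared (normalized) Perron entry — which is where the freedom to choose $Q$ orthogonal and the intrinsic description of the $C_{h\,i}$ are used — and then invoke the strict positivity in Perron--Frobenius to pass from $x_i^2=x_j^2$ to $x_i=x_j$. Everything else reduces formally to Theorem \ref{prop_main}.
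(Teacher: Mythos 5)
Your proposal is correct and follows essentially the same route as the paper: both directions of part 1 reduce to Theorem \ref{prop_main}, the degree claim comes from cospectrality at $r=2$, and the eigenvector claim identifies $C_{1\,i}$ with the squared (normalized) Perron entry via an orthogonal diagonalization and the simplicity of the Perron eigenvalue, concluding by non-negativity. The only cosmetic difference is that the paper obtains its orthogonal $Q$ by Gram--Schmidt applied to the algebraic $Q$ from Theorem \ref{prop_main}, whereas you invoke the $Q$-independence of the coefficients $C_{h\,i}$ (spectral projectors) and pick a convenient orthogonal $Q$ directly; both are equally valid.
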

\begin{proof}
	\textbf{(1)} If $\G$ is walk-regular, then by the Taylor series expansion of $[e^{\beta A}]_{ii}$ it follows that $\G$ is $\beta$-subgraph regular for every $\beta\in \R$. 
	
	If $\G$ is $\beta$-subgraph regular for $\beta \in \Qalg$, this means that $\forall \,i,j $ we have $[e^{\beta A}]_{ii} = [e^{\beta A}]_{jj}$. By Theorem \ref{prop_main}, we have that $[A^r]_{ii} = [A^r]_{jj}$ for every $r>0$ and for every $i,j$, which is the definition of walk-regularity.\\
	
	\textbf{(2)} 	The degree centrality of $i$ is the number of edges incident in $i$, which is $[A^2]_{ii}$. Since Theorem \ref{prop_main} implies that $[A^r]_{ii} = [A^r]_{jj}$ for every integer $r>0$, it follows that $[A^2]_{ii}=[A^2]_{jj}$.
	\vspace{0.2cm}

	Let us take $Q$ as in the proof of Theorem \ref{prop_main}. Since $A$ is real symmetric, $Q$ can be transformed into an orthogonal matrix still satisfying $A=QDQ^{-1}$ by applying Gram-Schmidt orthogonalization and column normalization; these operations preserve the algebraicity of its entries. So $Q^{-1}=Q^\T$ and the coefficients are simply $C_{h\,i}=\sum\limits_{k\in\II_h} q_{ik}\;\widehat{q}_{ki} = \sum\limits_{k\in\II_h} q_{ik}^2$.

	Up to permutation, we can assume that $\lambda_1$ is the eigenvalue with the greatest absolute value. Since $\G$ is undirected and connected, by the Perron-Frobenius Theorem \cite{perronfrobenius}, $\lambda_1$ is a simple eigenvalue with a non-negative eigenvector $(q_{1\,1},\ldots ,q_{n\,1})^\T$. The eigenvector centrality of vertex $i$ is defined as $q_{i\,1}$.
	\vspace{0.2cm}
	
	In the proof of Theorem \ref{prop_main} we have obtained that for $i,j$ which are $\beta$-subgraph equivalent, $C_{1\,i}=C_{1\,j}$. Since $\lambda_1$ is a simple eigenvalue, $C_{1\,i}=q_{i\,1}^2$ and $C_{1\,j}=q_{j\,1}^2$. We conclude that $q_{i\,1}=q_{j\,1}$ because they are both non-negative, proving that $i$ and $j$ have the same eigenvector centrality.
	
	\vspace{0.2cm}
	\textbf{(3)} It follows from point 2 and the fact in a $\beta$-subgraph regular graph all vertices are $\beta$-subgraph equivalent.
	
\end{proof}

\textbf{Remark.} Point 1 implies that the value(s) of $\beta$ in the counterexample found in \cite{KKScounterexample} is necessarily a transcendental number.

\section{Generalizations and remarks}

For any sufficiently regular function $f$ (analytic and with radius of convergence in $0$ greater than $\rho(A)$) the matrix function $f(A)$ can be calculated using the Taylor series expansion. 
Defining the \textit{diagonal entry function} as $f_D(i)=[f(A)]_{ii}$, it is possible to obtain properties of the graph and of the vertices $i,j$ by comparing $f_D(i)$ and $f_D(j)$. The subgraph centrality is a special case obtained by taking $f(x)=e^{\beta x}$. Other functions have also been studied in literature, for example $f(x)=\frac{1}{1-\alpha x}$ (with $0<\alpha < \frac{1}{\rho(A)}$) which gives the resolvent subgraph centrality; see, for instance, \cite{EHnetwork}.

If two vertices $i,j$ are cospectral, then by power series expansion it follows that  $f_D(i)=[f(A)]_{ii}=[f(A)]_{jj}=f_D(j)$: this means that $i$ and $j$ cannot be distinguished by any diagonal entry function. However, the function $f(x)=e^{\beta x}$ with algebraic $\beta$ has the ``maximum resolution" among all diagonal entry functions: by Theorem \ref{prop_main}, two non cospectral vertices must have different $\beta$-subgraph centralities.

It is not yet known a ``simple" function which can always distinguish vertices up to graph automorphism. Nevertheless, the subgraph centrality can distinguish non cospectral vertices, and that is the limit for any diagonal entry function. \\

We can observe that the proof of Theorem \ref{prop_main} does not need $A$ to be the adjacency matrix of a graph, but only that the roots of $P_A(x)$ and the eigenvectors of $A$ are algebraic. This is true if all the entries of $A$ are rational (or even algebraic) numbers. 

\begin{prop}
	Let $A\in \Qalg^{n\times n}$ be a diagonalizable matrix. If for $1\leq i,j\leq n$ and $\beta \in \Qalg$ we have that $[e^{\beta A}]_{ii}= [e^{\beta A}]_{jj}$, then for every integer $r>0$ we have $[A^r]_{ii}= [A^r]_{jj}$.
\end{prop}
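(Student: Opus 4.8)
The plan is to observe that the final Proposition is exactly Theorem~\ref{prop_main} with the hypothesis ``$A$ is the adjacency matrix of a directed graph'' replaced by the weaker ``$A\in\Qalg^{n\times n}$'', and that the proof of Theorem~\ref{prop_main} never genuinely used the combinatorial nature of $A$: it only used that the characteristic polynomial $P_A(x)$ has algebraic coefficients (so that its roots $\mu_1,\ldots,\mu_d$ are algebraic) and that one can pick eigenvectors with algebraic entries. So I would simply replay that argument, pointing out the one place where integrality was invoked and checking it still goes through over $\Qalg$.

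\medskip
Concretely, first I would write $A=QDQ^{-1}$ with $D$ diagonal (possible by hypothesis), let $\mu_1,\ldots,\mu_d$ be the distinct eigenvalues with multiplicities $m_h$, and set $\II_h=\{k:\lambda_k=\mu_h\}$ and $C_{h\,i}=\sum_{k\in\II_h}q_{ik}\widehat q_{ki}$ exactly as in the body of the paper, so that equations~(\ref{Ar_autoval}) and~(\ref{expA_autoval}) hold verbatim. Second, I would note that since $A\in\Qalg^{n\times n}$, the characteristic polynomial $P_A(x)=\det(xI-A)$ lies in $\Qalg[x]$ (this is the only modification: in Theorem~\ref{prop_main} one had $P_A(x)\in\Z[x]$, but all we ever need is that its roots are algebraic, and a polynomial with algebraic coefficients has algebraic roots since $\Qalg$ is algebraically closed). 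Hence $\mu_1,\ldots,\mu_d\in\Qalg$, and as $\beta\in\Qalg$ the products $\beta\mu_h$ are algebraic and pairwise distinct (because $\beta\neq 0$, which we may assume, the case $\beta=0$ being trivial). Third, for each $\mu_h$ I would apply Proposition~\ref{prop_kernel} to $B=A-\mu_h I\in\Qalg^{n\times n}$ to get a basis of $\Ker(B)$ with algebraic entries, assemble these into $Q\in\Qalg^{n\times n}$, and invoke Proposition~\ref{prop_inverse} to conclude $Q^{-1}\in\Qalg^{n\times n}$; therefore every $C_{h\,i}\in\Qalg$. Fourth, subtracting the two expansions~(\ref{expA_autoval}) for $[e^{\beta A}]_{ii}$ and $[e^{\beta A}]_{jj}$ gives
$$0=\sum_{h=1}^d (C_{h\,i}-C_{h\,j})\,e^{\beta\mu_h},$$
and the Lindemann–Weierstrass Theorem (applicable since the $\beta\mu_h$ are distinct algebraic numbers and the coefficients $C_{h\,i}-C_{h\,j}$ are algebraic) forces $C_{h\,i}=C_{h\,j}$ for all $h$; plugging this back into~(\ref{Ar_autoval}) yields $[A^r]_{ii}=[A^r]_{jj}$ for every integer $r>0$.

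\medskip
There is essentially no obstacle: the whole content is recognizing that the argument of Theorem~\ref{prop_main} is robust under weakening ``entries in $\{0,1\}$'' to ``entries in $\Qalg$''. The only point requiring a word of care is the passage from $P_A\in\Z[x]$ to $P_A\in\Qalg[x]$, and even there nothing is lost because $\Qalg$ is a field closed under the field operations and algebraically closed, so determinants of $\Qalg$-matrices are algebraic and roots of $\Qalg$-polynomials are algebraic. Thus the proof is a one-line reduction: ``the proof of Theorem~\ref{prop_main} applies verbatim, using that $P_A(x)\in\Qalg[x]$ and Propositions~\ref{prop_kernel}, \ref{prop_inverse} to keep all the $C_{h\,i}$ algebraic, and then Lindemann–Weierstrass.'' If one wanted to be slightly more self-contained one could also remark that the diagonalizability hypothesis is exactly what makes~(\ref{Ar_autoval}) and~(\ref{expA_autoval}) valid, and that it is automatic for symmetric real (in particular $0/1$-symmetric) matrices but must be assumed in general, as in the statement.
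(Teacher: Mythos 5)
Your proposal is correct and coincides with the paper's own treatment: the paper gives no separate proof of this proposition, precisely because (as you recognize) the argument of Theorem~\ref{prop_main} only uses that the roots of $P_A(x)$ and the eigenvector entries are algebraic, which Propositions~\ref{prop_kernel} and~\ref{prop_inverse} supply when $A\in\Qalg^{n\times n}$. One small caveat: the case $\beta=0$ is not ``trivial'' but rather must be excluded (for $\beta=0$ the hypothesis always holds while the conclusion generally fails); the paper handles this via its standing convention that $\beta\neq 0$, so you should invoke that assumption rather than dismiss the case.
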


We can see $A$ as the adjacency matrix of a weighted directed graph, with algebraic weights (possibly negative).
\\

The next question is whether the result can be generalized to a non-diagonalizable matrix $A$ (both for $A$ adjacency matrix of a directed graph, or more generally for  any $A$ with algebraic coefficients).

We have been able to obtain a partial answer to this question.

\begin{prop}
	Let $A\in \Qalg^{n\times n}$ with Jordan normal form $J$, i.e. $A=QJQ^{-1}$. Assume that $\lambda_1$ has index $\leq 2$ (its largest Jordan block has size $\leq 2$) and all other eigenvalues have index 1.
	If $[e^{\beta A}]_{ii}= [e^{\beta A}]_{jj}$, then  for every integer $ r>0$ we have $[A^r]_{ii}= [A^r]_{jj}$.
\end{prop}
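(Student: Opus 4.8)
The strategy is to mimic the diagonalizable case from Theorem~\ref{prop_main}, but replace the diagonal form $D$ by the Jordan form $J$ and carefully track the single $2\times 2$ Jordan block. Writing $A = QJQ^{-1}$, as in Proposition~\ref{prop_kernel} and Proposition~\ref{prop_inverse} we may choose $Q$ (and hence $Q^{-1}$) with entries in $\Qalg$: for each eigenvalue of index $1$ the columns of $Q$ come from an algebraic basis of the kernel of $A-\mu I$, and for the index-$2$ eigenvalue $\lambda_1$ one picks algebraic generalized eigenvectors by solving the (algebraic) linear systems $(A-\lambda_1 I)w = v$. Then both $A^r = Q J^r Q^{-1}$ and $e^{\beta A} = Q\, e^{\beta J}\, Q^{-1}$ can be expanded. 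Away from the $\lambda_1$-block the computation is identical to before: one gets a contribution $C_{h\,i}\,\mu_h^r$ to $[A^r]_{ii}$ and $C_{h\,i}\, e^{\beta\mu_h}$ to $[e^{\beta A}]_{ii}$ with $C_{h\,i}\in\Qalg$. The new feature is the $2\times 2$ block $\begin{pmatrix}\lambda_1 & 1\\ 0 & \lambda_1\end{pmatrix}$, whose $r$-th power is $\begin{pmatrix}\lambda_1^r & r\lambda_1^{r-1}\\ 0 & \lambda_1^r\end{pmatrix}$ and whose exponential (times $\beta$) is $e^{\beta\lambda_1}\begin{pmatrix}1 & \beta\\ 0 & 1\end{pmatrix}$.

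Collecting terms, the contribution of the $\lambda_1$-part to $[A^r]_{ii}$ has the form $C_{1\,i}\,\lambda_1^r + E_i\, r\,\lambda_1^{r-1}$ for suitable algebraic constants $C_{1\,i}, E_i$ built from the relevant entries of $Q$ and $Q^{-1}$, and its contribution to $[e^{\beta A}]_{ii}$ is $\left(C_{1\,i} + \beta E_i\right) e^{\beta\lambda_1}$. Hence the hypothesis $[e^{\beta A}]_{ii} = [e^{\beta A}]_{jj}$ becomes, after cancellation,
\begin{equation*}
0 = \bigl(C_{1\,i} + \beta E_i - C_{1\,j} - \beta E_j\bigr)\, e^{\beta\lambda_1}
+ \sum_{h\geq 2} \bigl(C_{h\,i}-C_{h\,j}\bigr)\, e^{\beta\mu_h}.
\end{equation*}
All exponents $\beta\lambda_1, \beta\mu_2,\ldots$ are distinct algebraic numbers and all coefficients are algebraic, so the Lindemann--Weierstrass Theorem forces $C_{h\,i}=C_{h\,j}$ for $h\geq 2$ and, crucially, $C_{1\,i}+\beta E_i = C_{1\,j}+\beta E_j$. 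This single scalar equation is the obstacle: it does not immediately give $C_{1\,i}=C_{1\,j}$ and $E_i = E_j$ separately, whereas $[A^r]_{ii}=[A^r]_{jj}$ requires exactly $C_{1\,i}=C_{1\,j}$ and $E_i = E_j$.

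To close this gap I would exploit the internal structure of $C_{1\,i}$ and $E_i$ in terms of the chosen Jordan basis. If $u$ is the genuine eigenvector and $w$ the generalized eigenvector for $\lambda_1$, with dual rows $\widehat u, \widehat w$ from $Q^{-1}$, one finds $E_i = q_{i,u}\,\widehat q_{w,i}$ (the product of the $u$-eigenvector component and the $w$-row component) and $C_{1\,i} = q_{i,u}\widehat q_{u,i} + q_{i,w}\widehat q_{w,i}$. The freedom in choosing the Jordan basis — one may rescale $u\mapsto tu$, $w\mapsto tw$ for any algebraic $t\neq 0$, which rescales $\widehat u,\widehat w$ by $t^{-1}$ and leaves $A^r$, $e^{\beta A}$ and all the $C,E$ unchanged — is not enough by itself, but one can instead obtain a second independent relation by also using that $[A^r]_{ii}=[A^r]_{jj}$ would follow if we knew $E_i=E_j$; conversely, note that the quantity $E_i$ is \emph{itself} recoverable from an exponential identity: replacing $\beta$ by a second algebraic value $\beta'$ (allowed, since the conclusion is about $A^r$, not about a fixed $\beta$) is \emph{not} available because the hypothesis is stated for one $\beta$. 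Therefore the right move is to argue that $C_{1\,i}$, being the total algebraic multiplicity-weighted idempotent trace $\operatorname{tr}(P_{\lambda_1})_{ii}$ where $P_{\lambda_1}$ is the spectral projector onto the generalized $\lambda_1$-eigenspace, satisfies $C_{1\,i} = n - \sum_{h\geq 2} C_{h\,i}$ because $\sum_h P_{\mu_h} = I$ forces $\sum_h C_{h\,i} = 1$ for every $i$; hence $C_{1\,i}=C_{1\,j}$ follows from the already-established $C_{h\,i}=C_{h\,j}$ for $h\geq 2$, and then $\beta E_i = \beta E_j$ with $\beta\neq 0$ gives $E_i = E_j$. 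With $C_{h\,i}=C_{h\,j}$ for all $h$ and $E_i = E_j$ in hand, expanding $[A^r]_{ii}$ via $A^r = QJ^rQ^{-1}$ yields $[A^r]_{ii} = \sum_h C_{h\,i}\mu_h^r + E_i\, r\lambda_1^{r-1} = [A^r]_{jj}$ for every $r>0$, which is cospectrality. The main thing to verify carefully is the bookkeeping that identifies the coefficient of $r\lambda_1^{r-1}$ as a single algebraic number $E_i$ and the identity $\sum_h C_{h\,i}=1$; the index-$2$ hypothesis on $\lambda_1$ (and index $1$ elsewhere) is exactly what keeps the exponential side free of spurious polynomial-in-$\beta$ factors of degree $\geq 2$, so that Lindemann--Weierstrass still applies cleanly.
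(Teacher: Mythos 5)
Your argument is essentially the paper's own proof: expand $A^r = QJ^rQ^{-1}$ and $e^{\beta A} = Q\,e^{\beta J}\,Q^{-1}$ with an algebraic Jordan basis, apply Lindemann--Weierstrass to obtain $C_{h\,i}=C_{h\,j}$ for $h\geq 2$ together with $C_{1\,i}+\beta E_i = C_{1\,j}+\beta E_j$, and then decouple the two unknowns via the identity $\sum_h C_{h\,i} = I_{ii} = 1$ coming from $QQ^{-1}=I$ (your ``$C_{1\,i}=n-\sum_{h\geq 2}C_{h\,i}$'' is a slip for ``$1-\sum_{h\geq 2}C_{h\,i}$'', corrected in the same sentence), which gives $C_{1\,i}=C_{1\,j}$ and then, since $\beta\neq 0$, $E_i=E_j$, hence cospectrality. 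The only bookkeeping difference is that you track a single $2\times 2$ block for $\lambda_1$ while the hypothesis allows several; the argument goes through verbatim with $E_i$ replaced by the sum of the corresponding products over all such blocks, exactly as in the paper.
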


\begin{proof}
	The Jordan normal form of $A=QJQ^{-1}$ is the following, with $m$ copies of the block $J_1$:
	
	$$J=Q^{-1}AQ=\begin{pmatrix} 
	J_1 &        &     &  \\
	& \ddots &     & \\
	&        & J_1 & \\
	&        &     & \lambda_{2m+1} &\\
	&        &     &           & \ddots \\
	& & & & & \lambda_n
	\end{pmatrix} , \hspace{1cm} 
	J_1=\begin{pmatrix} 
	\lambda_1 & 1 \\
	0& \lambda_1
	\end{pmatrix}.$$
	
	To calculate $A^r$ and $e^{\beta A}$, we need $J^r$ and $e^{\beta J}$, which are block-diagonal with the blocks relative to $J_1$ equal to:
	
	$$J_1^r=\begin{pmatrix} 
	\lambda_1^r & r \lambda_1^{r-1} \\
	0& \lambda_1^r
	\end{pmatrix} \hspace{1cm}
	e^{\beta J_1} = \begin{pmatrix}
	e^{\beta \lambda_1} & \beta e^{\beta \lambda_1}\\
	0& e^{\beta \lambda_1}
	\end{pmatrix}$$
	
	We obtain thus:
	
	$$[A^r]_{ii} = \sum\limits_{k=1}^n \,q_{ik} \;\widehat{q}_{ki}\; \lambda_k^r + \sum\limits_{l=1}^{m}  \,q_{i\,2l-1} \;\,\widehat{q}_{2l\,i} \; r \lambda_1^{r-1},$$
	$$[e^{\beta A}]_{ii} = \sum\limits_{k=1}^n \,q_{ik} \;\widehat{q}_{ki} \;e^{\beta \lambda_k} + \sum\limits_{l=1}^{m}  \,q_{i\,2l-1}\,\; \widehat{q}_{2l\,i}\;\beta e^{\beta \lambda_1}.$$
	
	Setting in this case $C_{h\,i} = \sum\limits_{k\in \II_h} q_{ik} \;\widehat{q}_{ki}$, by Lindemann-Weierstrass Theorem we have that $C_{h\,i } = C_{h\,j}$ for all $h\geq 2$. By looking at the coefficient of $e^{\beta \lambda_1}$ we obtain: 
	$$C_{1\,i} + \sum\limits_{l=1}^{m}\,  q_{i\,2l-1} \;\widehat{q}_{2l\,i}\,\beta = C_{1\,j} + \sum\limits_{l=1}^{m} \; q_{j\,2l-1} \;\widehat{q}_{2l\,j}\,\beta$$
	
	Using the relation $I=QQ^{-1}$, we have $1=I_{ii} = \sum\limits_{k=1}^n\,q_{ik}\, \widehat{q}_{ki}=\sum\limits_{h=1}^n C_{h\,i}$. Using the same relation for $I_{jj}$, we obtain that $C_{1\,i} = C_{1\,j}$, and so $\sum\limits_{l=1}^{m}  \,q_{i\,2l-1}\; \widehat{q}_{2l\,i} = \sum\limits_{l=1}^{m} \, q_{j\,2l-1}\; \widehat{q}_{2l\,j}$. From this it follows that $[A^r]_{ii}=[A^r]_{jj}$ for all $r>0$, as desired.
	
\end{proof}

We believe that the result is true for all non-diagonalizable matrices, so we set forth the following conjecture:

\begin{conj}
	Let $A$ be the adjacency matrix of a directed, unweighted graph, with $A$ non-diagonalizable. If for two vertices $i,j$ and for $\beta\in \Qalg$ we have $[e^{\beta A}]_{ii} = [e^{\beta A}]_{jj}$, then for every integer $r>0$ we have $[A^r]_{ii} = [A^r]_{jj}$.
\end{conj}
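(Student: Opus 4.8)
The plan is to run the machinery of Theorem~\ref{prop_main} and the preceding Proposition at an arbitrary Jordan structure, and then confront head‑on the linear‑algebraic obstruction that prevents the Lindemann–Weierstrass input alone from closing the argument.

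\textbf{Step 1 (algebraic Jordan form).} First I would produce a Jordan decomposition $A=QJQ^{-1}$ with all entries of $Q$ and $Q^{-1}$ in $\Qalg$. The eigenvalues $\mu_1,\dots,\mu_d$ are roots of the monic integer polynomial $P_A$, hence algebraic; for each $h$ the generalized eigenspace $\Ker(A-\mu_h I)^{n}$ has a basis with algebraic entries by the Gaussian‑elimination argument of Proposition~\ref{prop_kernel}, and inside it one extracts a basis of Jordan chains over the field $\Qalg$ (again linear algebra over $\Qalg$); then $Q^{-1}\in\Qalg^{n\times n}$ by Proposition~\ref{prop_inverse}. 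Writing $\nu_h$ for the index of $\mu_h$, the block computation of the preceding Proposition generalizes to
\[ [A^r]_{ii}=\sum_{h=1}^d\ \sum_{s=0}^{\nu_h-1} E_{h,s,i}\binom{r}{s}\mu_h^{\,r-s},\qquad [e^{\beta A}]_{ii}=\sum_{h=1}^d e^{\beta\mu_h}\sum_{s=0}^{\nu_h-1} E_{h,s,i}\,\frac{\beta^{s}}{s!}, \]
where $E_{h,s,i}$ is the sum of the products $q_{i\,\bullet}\,\widehat q_{\bullet\,i}$ over pairs of a Jordan chain of $\mu_h$ at mutual distance $s$; each $E_{h,s,i}$ is algebraic, and the eigenvalue $0$ (if present) is handled separately since it contributes to only finitely many of the $[A^r]_{ii}$.

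\textbf{Step 2 (Lindemann–Weierstrass).} Put $\Delta_{h,s}:=E_{h,s,i}-E_{h,s,j}$. The hypothesis reads $\sum_h\big(\sum_{s}\Delta_{h,s}\beta^{s}/s!\big)e^{\beta\mu_h}=0$; since $\beta\neq0$ the exponents $\beta\mu_h$ are distinct algebraic numbers and the coefficients are algebraic, so Lindemann–Weierstrass yields exactly one scalar equation per eigenvalue, $\sum_{s=0}^{\nu_h-1}\Delta_{h,s}\,\beta^{s}/s!=0$ for $1\le h\le d$. In particular $\Delta_{h,0}=0$ whenever $\nu_h=1$. By the linear independence of the sequences $\big(\binom{r}{s}\mu_h^{\,r}\big)_{h,s}$ (with the usual care at $\mu_h=0$), cospectrality of $i,j$ is \emph{equivalent} to $\Delta_{h,s}=0$ for all $h,s$, so it remains only to kill the defects attached to the eigenvalues of index $\ge2$.

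\textbf{Step 3 (structural relations and the obstacle).} Two more relations come for free: evaluating the first display at $r=0$ gives $\sum_h E_{h,0,i}=[A^0]_{ii}=1$, hence $\sum_h\Delta_{h,0}=0$; evaluating at $r=1$ and using that the graph is loopless ($[A]_{ii}=0=[A]_{jj}$) gives $\sum_h(\mu_h\Delta_{h,0}+\Delta_{h,1})=0$. Combined with the per‑eigenvalue equations of Step~2, a short Vandermonde‑type check (verifying the relevant equations are independent) forces all $\Delta_{h,s}=0$ provided $\sum_{h:\,\nu_h\ge2}(\nu_h-1)\le2$; this recovers the preceding Proposition (the case where this sum is $1$) and also covers the cases of two eigenvalues of index $2$, or a single eigenvalue of index $3$. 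Beyond that, one needs $\sum_{h:\,\nu_h\ge2}(\nu_h-1)-2$ further independent relations among the defects, and this is where I expect the real difficulty to lie: Lindemann–Weierstrass is a statement about a \emph{single} linear relation among exponentials and yields precisely one equation per eigenvalue, no more. The natural extra inputs do not seem to suffice — the Galois action of $\mathrm{Gal}(\Qalg/\Q)$ permutes the $\mu_h$ and (by uniqueness of the expansion of the rational number $[A^r]_{ii}$) sends $E_{h,s,i}\mapsto E_{\sigma(h),s,i}$, but this only links defects across a Galois orbit without lowering the total number of free parameters; and among the short‑walk counts $[A^r]_{ii}$ only $r=0$ and $r=1$ are genuinely forced, since already $[A^2]_{ii}=[A^2]_{jj}$ is part of the desired conclusion. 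Closing the gap in full would presumably require a combinatorial input specific to $0/1$ adjacency matrices, or a transcendence statement strictly stronger than Lindemann–Weierstrass; that is precisely why the statement is phrased as a conjecture rather than a theorem.
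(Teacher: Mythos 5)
The statement you are addressing is not proved in the paper at all: it is posed there as an open conjecture, and the paper's only progress in that direction is the partial Proposition of Section 5 treating a single eigenvalue of index at most $2$ with all other eigenvalues of index $1$. Your write-up is consistent with this. Steps 1 and 2 are sound: the Jordan decomposition can indeed be taken over $\Qalg$ (so $Q,Q^{-1}\in\Qalg^{n\times n}$ by Propositions \ref{prop_kernel} and \ref{prop_inverse}), the exponential-polynomial expansions of $[A^r]_{ii}$ and $[e^{\beta A}]_{ii}$ are correct, and Lindemann--Weierstrass yields exactly one relation $\sum_{s}\Delta_{h,s}\beta^{s}/s!=0$ per eigenvalue. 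Your Step 3 rediscovers the paper's own trick, namely the $r=0$ relation $\sum_h E_{h,0,i}=[QQ^{-1}]_{ii}=1$, and adds the $r=1$ relation for loopless graphs; the small linear systems you invoke are indeed nonsingular because the $\mu_h$ are pairwise distinct and $\beta\neq 0$, so you recover the paper's Proposition and even extend it slightly (two eigenvalues of index $2$, or one of index $3$).

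However, as you say yourself, this does not prove the conjecture, and the gap is exactly where you place it: the hypothesis supplies only $d+2$ linear relations among the $\sum_h\nu_h$ defects $\Delta_{h,s}$ ($d$ from Lindemann--Weierstrass plus the $r=0$ and $r=1$ identities), so as soon as $\sum_{h:\,\nu_h\ge 2}(\nu_h-1)>2$ the system is underdetermined and nothing in this machinery forces the remaining defects to vanish; the Galois-orbit remark produces no independent equations, and the relations for $r\ge 2$ are part of the desired conclusion, not of the hypothesis. So your text should be read as a correct partial result together with a precise identification of the obstruction -- which matches the paper, where the general statement is left as a conjecture for the same reason. Two small points if you keep the partial result: the $r=1$ relation needs the graph to be loopless (otherwise $[A]_{ii}=[A]_{jj}$ is not available a priori), and at a zero eigenvalue the sequences $\binom{r}{s}\mu_h^{\,r-s}$ degenerate, so the asserted equivalence ``cospectral $\iff$ all $\Delta_{h,s}=0$'' requires the adjustment you allude to (for instance $\Delta_{0,0}$ is invisible for $r\ge 1$, though it is then recovered from the $r=0$ identity).
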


Considering the application of Lindemann-Weierstrass Theorem, it is quite possible that this conjecture holds also for all non-diagonalizable matrices $A \in \Qalg^{n\times n}$. 

\section{Acknowledgements}
The authors would like to thank Michele Benzi for bringing this problem to their attention, for useful discussions on the topic and for his very helpful review of the manuscript, and an anonymous referee for helpful comments and suggestions.

\end{document}